\newtoks\prt
\newtheorem{thm}{Theorem}[section]
\newtheorem{prop}[thm]{Proposition}
\newtheorem{fact}[thm]{Fact}
\theoremstyle{definition}
\newtheorem*{remark}{Remark}
\def\eqn#1$$#2$${\begin{equation}\label#1#2\end{equation}}
\let\tilde\widetilde
\def\C{\mathcal C}
\def\Dc{\mathscr D} 
\def\Dcp{\mathscr D^{\,\prime}}
\def\F{\mathcal F}
\def\H{\mathcal H}
\def\ep{\varepsilon}
\def\e{{\boldsymbol  e}}
\def\o{{\boldsymbol o}}
\def\a{{\boldsymbol  a}}
\def\g{\boldsymbol g}
\def\f{\boldsymbol f}
\def\x{\boldsymbol x}
\def\y{\boldsymbol y} 
\def\h{\boldsymbol h}
\def\en{\mathbb N} 
\def\er{\mathbb R}
\def\dist{\operatorname{dist}}
\def \div {\operatorname{div}}
\def \Lip {\operatorname{Lip}}
\def\spt{\operatorname{spt}}
\def \reg {\partial _{\kern1pt\text{reg}}}
\newcommand{\ip}[2]{\left\langle#1,#2\right\rangle}
\def\di{\,\mbox{\rm d}}
\newcommand{\norm}[1]{\left\|#1\right\|}
\newcommand{\esssup}{\operatorname{ess sup}}
\newcommand{\abs}[1]{\left|#1\right|}
\newcommand{\setsep}{;\,}
\newcommand\ev[2]{\langle #1,#2\rangle}
\begin{document}

\title[Lipschitz-free spaces over convex domains in finite-dimensional spaces]{Isometric representation of Lipschitz-free spaces over convex domains in finite-dimensional spaces}

\author{Marek C\'uth, Ond\v{r}ej F.K. Kalenda and Petr Kaplick\'y}

\address{Department of Mathematical Analysis \\
Faculty of Mathematics and Physic\\ Charles University\\
Sokolovsk\'{a} 83, \\ 186 75, Praha 8, Czech Republic}

\email{cuth@karlin.mff.cuni.cz}

\email{kalenda@karlin.mff.cuni.cz}

\email{kaplicky@karlin.mff.cuni.cz}

\subjclass[2010]{46B04, 26A16}

\keywords{Lipschitz-free space, convex domain, divergence}

\thanks{M.~C\'uth is a junior researcher in the University Center for Mathematical Modelling, Applied Analysis and Computational Mathematics (MathMAC). M.~C\'uth and O.~Kalenda were supported in part by the grant GA\v{C}R P201/12/0290. P. Kaplick\'y is a member of the Ne\v{c}as Center for Mathematical Modeling.}

\begin{abstract} 
Let $E$ be a finite-dimensional normed space and $\Omega$ a nonempty convex open set in $E$.
We show that the Lipschitz-free space of $\Omega$ is canonically isometric to the quotient of $L^1(\Omega,E)$ by the subspace consisting of vector fields with zero divergence in the sense of distributions on $E$.
\end{abstract}

\maketitle

\section{Introduction}

Given a metric space $M$, by $\F(M)$ we denote the \emph{Lipschitz-free space over~$M$}. This is a Banach space whose linear structure somehow reflects the metric structure of~$M$. The study of Banach space theoretical properties of Lipschitz-free spaces was initiated by a paper by G. Godefroy and N. Kalton \cite{gk03}, where the authors proved, using this notion, e.g. that if a separable Banach space~$Y$ is isometric (not necessarily linearly) to a subset of a Banach space~$X$, then $Y$ is already linearly isometric to a subspace of $X$. Soon after, the study of Lipschitz-free Banach spaces became an active field of study, see e.g. \cite{glz}, \cite[Section 10]{ost} and the results mentioned below. However, the structure of these spaces is still very poorly understood to this day. For example, it is not known whether $\F(\er^2)$ is isomorphic with $\F(\er^3)$.

Let us recall the construction of $\F(M)$. Choose a distinguished ``base point'' $0\in M$ and denote by $\Lip_0(M)$ the space of all real-valued Lipschitz functions on $M$ that map $0\in M$ to $0\in\er$. It becomes a Banach space if we define the norm of $f$ to be its minimal Lipschitz constant. For any $x\in M$ we denote by $\delta(x)\in\Lip_0(M)^*$ the evaluation functional, i.e. $\ev{\delta(x)}f=f(x)$ for $f\in\Lip_0(M)$.  It is easy to see that $\delta$ is an isometric embedding of $M$ into $\Lip_0(M)^*$. The space $\F(M)$ is defined to be the closed linear span of $\{\delta(x)\setsep x\in M\}$ with the dual space norm denoted simply by $\norm{\cdot}$. For details and additional properties see Section 2 or \cite[Section~1]{CDW}.

There is a large number of results on the structure of Lipschitz-free space (some of them are recalled below), but an explicit isometric representation of $\F(M)$ is known only for very special $M$. It is known that $\F(\er)$ is isometric to $L^1(\er)$ (cf. \cite[page 128]{gk03} or \cite[page 33]{glz}) and there is an isometric representation for certain discrete spaces. Our main result is an explicit isometric representation of $\F(\Omega)$ where $\Omega$ is a nonempty convex domain in a finite-dimensional normed space. It reads as follows:

\begin{thm}\label{t:main} Let $E$ be a real normed space of dimension $d\in\en$ and $\Omega\subset E$ be a nonempty convex open subset. Then the Lipschitz-free space $\F(\Omega)$ is canonically isometric to the quotient space
$$L^1(\Omega,E)/\{\g\in L^1(\Omega,E)\setsep\div\g=0\mbox{ in the sense of distributions on }\er^d\}.$$

Moreover, if  $\o\in\Omega$  is the base point and $\a\in\Omega$ is arbitrary, then in this identification we have
\begin{multline*}\delta(\a)=[\g]\mbox{ if and only if }\g\in  L^1(\Omega,E)\\\mbox{ and }\div\g=\ep_\o-\ep_\a\mbox{ in the sense of distributions on }\er^d,\end{multline*}
where $\ep_{\x}$ denotes the Dirac measure supported at $\x$.
\end{thm}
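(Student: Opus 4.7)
The plan is to realise the duality $\F(\Omega)^{*}=\Lip_{0}(\Omega)$ through the gradient map. First I would verify that for any $f\in\Lip_{0}(\Omega)$ the distributional gradient $\nabla f$ lies in $L^{\infty}(\Omega,E^{*})$ with $\|\nabla f\|_{\infty}=\Lip(f)$; this is the classical fact that on a convex open set the Lipschitz constant equals the essential supremum of $|\nabla f|$, proved by integrating along segments and using Rademacher's theorem. Thus $\nabla:\Lip_{0}(\Omega)\to L^{\infty}(\Omega,E^{*})$ is a linear isometry. Using the finite-dimensionality of $E$ to identify $L^{\infty}(\Omega,E^{*})=L^{1}(\Omega,E)^{*}$, the adjoint gives a bounded linear map $T:L^{1}(\Omega,E)\to\F(\Omega)$ with $T(\g)$ acting on $f\in\Lip_{0}(\Omega)$ by $f\mapsto\int_{\Omega}\nabla f\cdot\g\,dx$. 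To check that $T(\g)$ really lies in $\F(\Omega)$ (and not just in $\Lip_{0}(\Omega)^{*}$) one needs that if $f_{n}\to f$ pointwise with uniform Lipschitz bound, then $\nabla f_{n}\to\nabla f$ weakly-$*$ in $L^{\infty}$; this follows by testing against $\varphi\in C_{c}^{\infty}(\Omega)$ and using a density argument with the uniform $L^{\infty}$ bound on $\nabla f_{n}$.

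The heart of the proof is the identification of $\ker T$. The easy direction is: if $T(\g)=0$ then testing with $f=\varphi|_{\Omega}-\varphi(\o)\in\Lip_{0}(\Omega)$ for $\varphi\in C_{c}^{\infty}(\er^{d})$ yields $\int_{\Omega}\nabla\varphi\cdot\g\,dx=0$, which is precisely $\div\g=0$ on $\er^{d}$ (with $\g$ extended by zero). The converse is the technical point. Given $f\in\Lip_{0}(\Omega)$, extend $f$ by McShane to $F\in\Lip(\er^{d})$ with $F(\o)=0$ and $|F(x)|\leq\Lip(F)|x-\o|$. Mollify $F_{\ep}:=F\ast\rho_{\ep}$ and pick cutoffs $\chi_{R}\in C_{c}^{\infty}(\er^{d})$ with $\chi_{R}=1$ on $B(\o,R)$ and $\|\nabla\chi_{R}\|_{\infty}\leq C/R$. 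Since $\div\g=0$, one has $\int\nabla(\chi_{R}F_{\ep})\cdot\g\,dx=0$, and the remainder $\int F_{\ep}\nabla\chi_{R}\cdot\g\,dx$ is bounded by $C\Lip(F)\int_{B(\o,2R)\setminus B(\o,R)}|\g|\,dx\to 0$ as $R\to\infty$ because $\g\in L^{1}$ and $F_{\ep}$ has linear growth. Letting $\ep\to 0$ and using Rademacher together with dominated convergence (since $|\nabla F_{\ep}|\leq\Lip(F)$ and $\g\in L^{1}$) gives $\int_{\Omega}\nabla f\cdot\g\,dx=0$. This step is where I expect the main technical obstacle, in making the two limits $R\to\infty$ and $\ep\to 0$ work together.

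To conclude that the induced map $L^{1}(\Omega,E)/\ker T\to\F(\Omega)$ is a surjective isometry it remains to show that the range of $\nabla$ is weak-$*$ closed in $L^{\infty}(\Omega,E^{*})$; by the Krein--\v{S}mulyan theorem it suffices to verify closedness on bounded balls, where the weak-$*$ topology is metrizable. If $\nabla f_{n}\to h$ weak-$*$ with $\|\nabla f_{n}\|_{\infty}\leq M$, the $f_{n}$ are uniformly Lipschitz with $f_{n}(\o)=0$, hence equibounded on compacts; Arzel\`a--Ascoli extracts a locally uniformly convergent subsequence with limit $f\in\Lip_{0}(\Omega)$, and the argument from the previous paragraph gives $\nabla f=h$. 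The ``moreover'' part is then immediate: $T(\g)=\delta(\a)$ is the same as $\int_{\Omega}\nabla f\cdot\g\,dx=f(\a)-f(\o)$ for every $f\in\Lip_{0}(\Omega)$; testing against $f=\varphi|_{\Omega}-\varphi(\o)$ with $\varphi\in C_{c}^{\infty}(\er^{d})$ translates this to $-\langle\div\g,\varphi\rangle=\langle\ep_{\a}-\ep_{\o},\varphi\rangle$, i.e. $\div\g=\ep_{\o}-\ep_{\a}$ on $\er^{d}$, and the converse direction uses the same McShane--mollify--cutoff scheme as in the kernel computation.
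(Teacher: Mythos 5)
Your proposal is correct in outline and shares the paper's basic strategy: realize $\Lip_0(\Omega)$ isometrically inside $L^\infty(\Omega,E^*)=L^1(\Omega,E)^*$ via the gradient, identify the pre-annihilator of the range with the fields of zero distributional divergence on $\er^d$, and dualize. Your kernel computation (McShane extension, mollification, cutoffs with $\|\nabla\chi_R\|_\infty\le C/R$, using $\g\in L^1$ against the linear growth of the extension) is essentially the computation in the paper's Proposition~\ref{P:3}, and your ``easy direction'' correctly tests with restrictions of $\varphi\in\Dc(\er^d)$, which is exactly the point where divergence on $\er^d$ rather than on $\Omega$ enters. Where you genuinely diverge from the paper is in two lemmas. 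First, the paper characterizes the range of the gradient as the curl-free fields $X(\Omega)$ (Proposition~\ref{P:2}(ii), a Poincar\'e-lemma argument on the inner sets $\Omega_n$ plus Arzel\`a--Ascoli) and obtains weak$^*$-closedness only afterwards from the annihilator identity $X(\Omega)=Y(\Omega)^\perp$; you prove weak$^*$-closedness directly via Krein--\v{S}mulyan and Arzel\`a--Ascoli, which is shorter and bypasses the curl-free description entirely (it is not needed for the theorem as stated). Second, the paper obtains surjectivity of the adjoint onto $\F(\Omega)$ by explicitly constructing, for each $\a$, a compactly supported $\g$ with $\div\g=\ep_\o-\ep_\a$ (Proposition~\ref{P:div}, via the Newtonian kernel and a Bogovskii-type lemma), whereas you get surjectivity by soft duality; in your route the existence of a representative with $\div\g=\ep_\o-\ep_\a$ becomes a corollary of the theorem rather than an input, which is legitimate. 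Your criterion for membership of $T(\g)$ in $\F(\Omega)$ (weak$^*$-continuity on the unit ball of $\Lip_0(\Omega)$, where weak$^*$ is pointwise convergence and is metrizable by separability) is also a valid, and arguably cleaner, substitute for the paper's explicit verification that $T^*[\g]=\delta(\a)$.

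One deduction you assert but do not carry out: after showing that the induced map $\bar T\colon L^1(\Omega,E)/\ker T\to\Lip_0(\Omega)^*$ is an isometry (this does require weak$^*$-closedness, via $(\ker T)^\perp=\overline{\nabla\Lip_0(\Omega)}^{w^*}=\nabla\Lip_0(\Omega)$ and the duality $(L^1/N)^*=N^\perp$) with range contained in $\F(\Omega)$, you still must argue that the range is \emph{all} of $\F(\Omega)$. This is easy but should be said: the range is closed, being the isometric image of a Banach space, and if it were proper then Hahn--Banach would produce a nonzero $f\in\Lip_0(\Omega)=\F(\Omega)^*$ with $\int_\Omega\ip{\nabla f}{\g}=0$ for every $\g\in L^1(\Omega,E)$, forcing $\nabla f=0$ a.e.\ and hence $f\equiv f(\o)=0$ on the connected set $\Omega$, a contradiction. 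With that sentence supplied, the argument is complete.
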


Let us explain the notation and terminology used in the theorem. First, by $[\g]$ we denote the equivalence class of $\g$ as an element of the quotient space. Further, since $L^1(\Omega,E)$ is canonically isometrically embedded into $L^1(\er^d,E)$ (just extend any $\g\in L^1(\Omega,E)$ by zero outside $\Omega$), any $\g\in L^1(\Omega,E)$ can be viewed either as a regular distribution on $\Omega$ or as a regular distribution on $\er^d$. Thus, $\div\g$ in the sense of distributions on $\er^d$ is the distributional divergence of the regular distribution on $\er^d$ induced by the described extension of $\g$. This is quite an important matter, since the result would become false if we considered divergence in the sense of distributions on $\Omega$.

The above theorem is new for $d\ge2$, but it covers also the known case $d=1$. Indeed, if $E=\er$ and $\Omega=(a,b)$ where $-\infty\le a<b\le\infty$, then $\F((a,b))$ is canonically identified with $L^1((a,b))$ and, if $0\in (a,b)$, then in this identification we have 
$$\delta(x)=\begin{cases} \chi_{(0,x)},& x\in(0,b),\\ 0, & x=0,\\ -\chi_{(-x,0)},& x\in(a,0).\end{cases}$$
This case is covered by our theorem, since in dimension one the divergence is just the derivative and the only $f\in L^1(a,b)$, whose derivative in the sense of distributions on $\er$ is zero, is the constant zero function.
(We point out that we work in distributions on $\er$, not in distributions on $(a,b)$. If $(a,b)$ is a bounded interval, then constant functions on $(a,b)$ belong to $L^1(a,b)$ and their derivative in the sense of distributions on $(a,b)$ is zero unlike their derivative in the sense of distributions on $\er$.)
Moreover, it is clear that the distributional derivative of the characteristic function $\chi_{(u,v)}$ is $\ep_u-\ep_v$.

Our result was motivated in part by a result of N.~Lerner \cite{lerner} who proved that $\Lip_0(\er^d)$ is isomorphic to the dual of the quotient described in our theorem. This is closely related to our result, since $\F(M)^*$ is isometric to $\Lip_0(M)$ for any metric space $M$ (see the next section). However, it was not known to us whether $\F(M)$ is the unique predual of $\Lip_0(M)$, therefore to give a representation of $\F(\er^d)$ we described the mapping $\delta$. Let us note that it has been proved recently by N. Weaver \cite{weaver} that $\F(M)$ actually is the unique predual of $\Lip_0(M)$ if $M$ is a convex set in a Banach space; however, the description of the mapping $\delta$ could be of an independent interest. We further extended the results for other norms on $\er^d$ (which is easy) and to general convex domains (which requires some additional work).

Let us note that during the review process of this paper we found out that the problem of characterizing $\F(\er^d)$ has been independently investigated in the Master thesis \cite{flores}, where some partial results were obtained, and that
%, where the author came to a different description of $\Lip_0(\er^d)$ and tried to describe $\F(\er^d)$ using this different approach. As far as we know, his independent work is still in progress. Finally,
similar ideas as we use in our paper were also used in \cite[Appendix: The Sobolev space $W^{-1,1}$]{maly}, where the author describes the Sobolev space $W^{-1,1}$.

Let us summarize what is known to the authors about the structure of Lipschitz-free spaces over subsets of $\er^d$. If $X$ and $Y$ are Banach spaces, we write $X\equiv Y$, $X\cong Y$ and $X\hookrightarrow Y$ if $X$ is linearly isometric with $Y$, linearly isomorphic with $Y$ and isomorphic to a subspace of $Y$, respectively. If this is not the case, we write $X\nequiv Y$, $X\ncong Y$ and $X\not \hookrightarrow Y$.

\begin{fact}
\begin{enumerate}

	\item $\F(\er)\equiv L^1(\er)$.

    \item For any measure $\mu$, we have $\F(\er^2)\not \hookrightarrow L^1(\mu)$. Moreover, there exists a convergent sequence $K\subset\er^2$ such that $\F(K)\not\hookrightarrow L^1(\er)$

    \item If $M\subset\er^d$ has a nonempty interior, then $\F(M)\cong \F(\er^d)$.

    \item 	For every $M\subset\er^d$, the space $\F(M)$ is weakly sequentially complete. In particular, $c_0\not\hookrightarrow \F(M)$.

    \item $\F(\er^d)$ has a Schauder basis. Moreover, for every $M\subset\er^d$ bounded and convex, $\F(M)$ has the Schauder basis as well.

    \item For every $M\subset \er^d$, the space $\F(M)$ has the bounded approximation property (BAP). Moreover, if $M$ is compact and convex or $M = \er^d$, then $\F(M)$ has the metric approximation property (MAP) with respect to any norm on $\er^d$.

    \item If $K\subset \er^d$ is a countable compact set, then $\F(K)$ is a dual space which has the MAP. On the other hand, if $M\subset\er^d$ is convex and not reduced to a point, then $L^1([0,1])\hookrightarrow \F(M)$; in particular, $\F(M)$ is not a dual space.

    \item For a metric space $M$, the space $\F(M)$ is isometric to a subspace of an $L^1$ space if and only if $M$ isometrically embeds into an $\er$-tree.

\end{enumerate}
\end{fact}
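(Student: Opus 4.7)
The final statement is a Fact collecting eight known theorems about Lipschitz-free spaces over subsets of $\er^d$; consequently the plan is to justify each item by citation to the literature rather than to reprove anything. I would walk through the list and match each item with the appropriate published reference, splitting the citation whenever a single item packages several distinct assertions.

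First I would dispatch the items with clean, single-source attributions: item (1) is due to Godefroy--Kalton \cite{gk03} (and is also the one-dimensional case of Theorem~\ref{t:main}); item (3) is Kaufmann's theorem on isomorphic invariance of $\F$ under passage to a set with nonempty interior; item (4) follows from Kalton's work on Lipschitz-free spaces over doubling metric spaces, with the consequence $c_0\not\hookrightarrow\F(M)$ being automatic from weak sequential completeness; item (8) is Godard's $\er$-tree characterization. For item (5) I would cite H\'ajek--Perneck\'a for the Schauder basis of $\F(\er^d)$ together with the extension to bounded convex subsets.

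Next I would address the items that combine several results. Item (2) splits into the non-embedding $\F(\er^2)\not\hookrightarrow L^1(\mu)$, attributable to Naor--Schechtman, and the sharper convergent-sequence obstruction, coming from Kalton or a subsequent refinement. Item (6) merges the BAP (available for all $M\subset\er^d$ by results of Lancien--Perneck\'a and their predecessors) with the MAP in the compact-convex case and for the whole space; in the latter setting Theorem~\ref{t:main} makes the finite-dimensional picture transparent and allows one to accommodate any norm on $\er^d$. Item (7) splits further: Dalet's theorem gives the dualness of $\F(K)$ for countable compact $K$, while the embedding $L^1([0,1])\hookrightarrow\F(M)$ for every nontrivial convex $M$ follows from the isometry $\F([0,1])\equiv L^1([0,1])$ combined with a Lipschitz extension from a segment inside $M$ back to $M$.

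The main obstacle here is entirely bibliographical rather than mathematical: one must verify that the hypotheses in each original reference match those stated in the Fact — for instance, that Kalton's weak sequential completeness really covers all subsets of $\er^d$ in item (4), that the MAP assertion in item (6) is available with respect to every norm on $\er^d$ and not only the Euclidean one, and that the countable-compact statement in item (7) is stated at the level of generality needed. Locating a single canonical reference when several overlapping sources exist is the only delicate step.
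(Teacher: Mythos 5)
Your approach coincides with the paper's: the Fact is a survey statement and the paper likewise ``proves'' it by attribution to the literature, so the method is the same. But since the entire content of such a proof is bibliographical, the misattributions are the substance of the gap. For item (4), the weak sequential completeness of $\F(M)$ for \emph{every} $M\subset\er^d$ is not a theorem of Kalton on doubling metric spaces; the paper credits \cite[Theorem 1.3]{CDW} (C\'uth--Doucha--Wojtaszczyk), and the same source \cite[Theorem 1.2 and Remark 4.5]{CDW} is where the ``moreover'' part of item (2) (a convergent sequence $K\subset\er^2$ with $\F(K)\not\hookrightarrow L^1(\er)$) comes from --- not ``Kalton or a subsequent refinement.'' (The paper also notes that the first part of (2), due to Naor--Schechtman, already follows from Kisljakov \cite{kis}.) For item (6), the compact convex MAP case is Perneck\'a--Smith \cite[Corollary 1.2]{ps} and the case $M=\er^d$ is \cite[Proposition 5.1]{gk03}; the paper does \emph{not} deduce the MAP from Theorem~\ref{t:main}, and your suggestion that Theorem~\ref{t:main} ``makes the picture transparent'' for arbitrary norms would itself need an argument --- an isometric quotient representation of $\F(\Omega)$ does not by itself produce metric approximation. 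Your attributions for items (1), (3), (5), (7) and (8) match the paper's (Godefroy--Kalton, Kaufmann, H\'ajek--Perneck\'a, Dalet, Godard), and your sketch of the second half of (7) is essentially the paper's: $[0,1]$ bi-Lipschitz embeds into any nontrivial convex $M$, so $L^1([0,1])\equiv\F([0,1])\hookrightarrow\F(M)$, whence $\F(M)$ is a separable space failing the Radon--Nikod\'ym property and therefore not a dual space --- a step worth making explicit, since ``not a dual space'' does not follow from the embedding alone without the RNP argument.
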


		The assertion (1) is well known, see e.g. \cite[page 128]{gk03} or \cite[page 33]{glz}, the proof is easy using the above description and we recall it at the beginning of the following section. The first result in (2) was shown by A. Naor and G. Schechtmann \cite{naoSch}. As observed in \cite[Remark 4.2]{CDW}, this result actually follows already from \cite{kis} using minor modifications. The ``moreover'' part of (2) follows from \cite[Theorem 1.2 and Remark 4.5]{CDW}
The assertion (3) is proved in \cite[Corollary 3.5]{kauf} and the assertion (4) in  \cite[Theorem 1.3]{CDW}.
The assertion (5) is a result of E. Perneck\'a and P. H\'ajek, see Theorem 3.1 and  page 645 in \cite{hp}. 
The first statement of (6) follows from \cite[Proposition 2.3]{lp}, the case of $\F(\er^d)$  from \cite[Proposition 5.1]{gk03}, the remaining case is proved in \cite[Corollary 1.2]{ps}.
The first part of (7) is a special case of a result of A. Dalet \cite[Theorem 2.1]{aude}. On the other hand, if $M$ is a convex set, then $[0,1]$ bi-Lipschitz embeds into $M$; hence, $L^1([0,1])\equiv\F([0,1])\hookrightarrow \F(M)$. Therefore, if $\F(M)$ is a separable space which fails the Radon-Nikod\'ym property, hence it is not a dual space.
The assertion (8) is proved in \cite[Theorem 4.2]{godard}

It seems that the main open problems are whether $\F(\ell_1)$ is complemented in its bidual and whether $\F(M)$ has BAP for every uniformly discrete metric space, see \cite[Problem 16 and Problem 18]{glz}.

\section{Preliminaries}

In this section we introduce some notation we need to formulate and prove our results. Let us recall some basic facts concerning the Lipschitz-free spaces (for the proofs we refer to~\cite[Section~1]{CDW}). Let $(M,\rho,0)$ be a pointed metric space, i.e. a metric space with a distinguished ``base point'' denoted by $0$. The space $\F(M)$ described in the introduction is uniquely characterized by the following universal property:

Let $X$ be a Banach space and suppose that $L\colon M\to X$ is a Lipschitz mapping satisfying $L(0)=0$. Then there exists a unique bounded linear operator $\widehat{L}\colon\F(M)\to X$ extending $L$, i.e. the following diagram commutes:

\begin{center}\begin{tikzpicture}

  \matrix (m) [matrix of math nodes,row sep=3em,column sep=4em,minimum width=2em]
  {
    M & X \\
   \F(M) & X \\};

  \path[-stealth]

    (m-1-1) edge node [left] {$\delta_M$} (m-2-1)

            edge node [above] {$L$} (m-1-2)

    (m-2-1.east|-m-2-2) edge [dashed] node [above] {$\widehat{L}$} (m-2-2)

    (m-1-2) edge node [right] {$\mathrm{id}_X$} (m-2-2);    

\end{tikzpicture}\end{center}
Using this universal property of $\F(M)$ for $X=\er$ it can be rather easily shown that $\F(M)^*$ is linearly isometric to $\Lip_0(M)$.

The notation and terminology we use are relatively standard. For a Banach space $X$, $x\in X$ and $x^*\in X^*$ we denote by $\ip{x^*}{x}$ or by $\ip{x}{x^*}$ the application of the functional $x^*$ on the vector $x$; that is, $\ip{x^*}{x} = \ip{x}{x^*} = x^*(x)$.

Our basic setting is the following:

Let $E$ be a finite-dimensional real normed space with a fixed basis $\e_1,\dots,\e_d$, where $d\ge 2$. $E^*$ be the dual space to $E$ and $\e^*_1,\dots\e^*_d$ be the dual basis. Using the coordinates with respect to these bases we can canonically identify both $E$ and $E^*$ with the space $\er^d$ equipped with the corresponding norms. On $\er^d$ we will consider the standard Lebesgue $d$-dimensional measure $\lambda^d$ and the standard $(d-1)$-dimensional Hausdorff measure $\H^{d-1}$. These measures are transferred to $E$ and $E^*$ via the mentioned identification.

If $U\subset \er^d$ (or $U\subset E$) is a nonempty open set, by $\Dc(U)$ we denote the space of real-valued $\C^\infty$-smooth functions with a compact support in $U$. The symbol $\Dcp(U)$ denotes the respective space of distributions.
Finally, let $(u_n)$ be a fixed approximate unit in $\Dc(\er^d)=\Dc(E)$, i.e., $u_n(\x)=n^d\rho(n\x)$, where $\rho\in\Dc(\er^d)$ is a non-negative function with $\int_{\er^d}\rho=1$.

We will use also vector-valued functions. $\Dc(U,\er^d)$ will denote the space of $\C^\infty$-smooth vector fields with a compact support in $U$ and with values in $\er^d$, i.e., $d$-tuples of elements of $\Dc(U)$. If $\g = (g_1,\ldots,g_n)$ is a $d$-tuple of distributions, then $\div\g = \sum_{i=1}^n \partial_ig_i$.

By $L^1(U,E)$ we denote the space of all (equivalence classes of) integrable $E$-valued functions defined on $U$. 
The norm on this space is defined by
$$\norm{\f}=\int_{U} \norm{\f(\x)}_E\di\lambda^d(\x),\quad \f\in L^1(U,E).$$
Recall that $L^1(U,E)$ is canonically embedded to $L^1(\er^d,E)$,
where each $f\in L^1(U,E)$ is extended by zero outside $E$.

The space $L^\infty(U,E^*)$ is the space of all (equivalence classes of) essentially bounded measurable $E^*$-valued functions defined on $U$. The norm is defined by 
$$\norm{\g}=\esssup_{\x\in U} \norm{\g(\x)}_{E^*},\quad \g\in L^\infty(U,E^*).$$
The space $L^\infty(U,E^*)$ is canonically isometric to the dual of $L^1(U,E)$, the duality being defined by
$$\ip{\g}{\f} = \int_U \ip{\g(\x)}{\f(\x)}\di\lambda^d(\x), \quad \g\in L^\infty(U,E^*),\f\in L^1(U,E).$$

Finally, let $\Omega$ be a fixed nonempty convex open subset of $E$ with a base point $\o\in\Omega$. Without loss of generality we may and shall suppose that $\o$ is the origin.

\section{Proof of the main result}

The idea of the proof of our main result, Theorem \ref{t:main}, is to mimic the known and easy proof in dimension one. Let us recall it:

It is well known that the mapping $T:\Lip_0(\er)\ni f\mapsto f'\in L^\infty(\er)$ is an onto isometry. Consider the adjoint $T^*$. If $x>0$, then $T^*(\chi_{(0,x)})=\delta(x)$ since for any $F\in\Lip_0(\er)$ we have
$$\ip{T^*\chi_{(0,x)}}{F} =\ip{\chi_{(0,x)}}{TF}=\ip{\chi_{(0,x)}}{F'}=\int_0^x F'=F(x)=\ip{\delta(x)}{F}.$$
Similarly, $T^*(-\chi_{(x,0)})=\delta(x)$ for $x<0$. Since characteristic $\delta(\er)$ is linearly dense in $\F(\er)$ and characteristic functions of intervals are linearly dense in $L^1(\er)$, it easily follows that $T^*$ isometrically maps $L^1(\er)$ onto $\F(\er)$.

Hence, we try to mimic this approach in higher dimension and for general convex domains. We consider the mapping $T:\Lip_0(\Omega)\ni f\mapsto \nabla f\in L^\infty(\Omega,E^*)$. It is rather a standard fact that $T$ is an into isometry (cf. Proposition~\ref{P:2}(i)).
However, it is not onto, the range is described in Proposition \ref{P:2}(ii). It turns up that the range is weak$^*$-closed in $L^\infty(\Omega,E^*)$  and in Proposition \ref{P:3} we describe its pre-annihilator and hence, the respective quotient is a predual. Finally, in Proposition~\ref{P:main} we show, using the auxiliary Proposition~\ref{P:div}, that the adjoint map $T^*$ maps this predual isometrically onto $\F(\Omega)$ and give a representation of the mapping $\delta$. A large part of the first two steps, i.e., Proposition~\ref{P:2}(ii) 
and Proposition~\ref{P:3}, for the case $\Omega=\er^d$ are due to \cite{lerner}. 

We start by the following proposition. The parts (i) and (ii) are classical well-known facts, we provide references for the proof. The assertion (iii) is essentially standard as well, 
we provide the proof for the sake of completeness.

\begin{prop}\label{P:L1}

Let $F:\Omega\to\er$ be an $L$-Lipschitz function. Then the following hold:

\begin{itemize}

	\item[(i)] For almost all $\x\in\Omega$ there exists the Fr\'echet derivative $F'(\x)\in E^*$. Moreover, $\norm{F'(\x)}\le L$ whenever $F'(\x)$ exists.

	\item[(ii)] The mapping $F':\x\mapsto F'(\x)$ is an almost everywhere defined measurable function. Moreover, it is the gradient of $F$ in the sense of distributions on $\Omega$, i.e., 
	$$\int_\Omega F'\cdot \varphi \di\lambda^d=-\int_\Omega F\cdot \nabla\varphi\di\lambda^d,\quad \varphi\in\Dc(\Omega).$$

	\item[(iii)] For each $\x,\y\in\Omega$ we have
	$$F(\y)-F(\x)=\lim_{n\to\infty} \int_0^1 \ip{(F'*u_n)(\x+t(\y-\x))}{\y-\x}\di t.$$

\end{itemize}

\end{prop}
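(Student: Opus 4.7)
The three assertions are all classical, and I would handle them in order as follows.

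For (i), I would simply invoke Rademacher's theorem, which (via the fixed identification of $\Omega$ with an open subset of $\er^d$) yields almost-everywhere Fréchet differentiability of the Lipschitz function $F$. The norm bound is immediate from the directional-derivative estimate
\[
|F'(\x)(\h)|=\lim_{t\to 0^+}\frac{|F(\x+t\h)-F(\x)|}{t}\le L\|\h\|_E,\qquad \h\in E,
\]
valid at every point of differentiability, which gives $\|F'(\x)\|_{E^*}\le L$.

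For (ii), measurability of $F'$ follows by realizing each partial $\partial_iF$ as an almost-everywhere pointwise limit of the continuous difference quotients $(F(\cdot+h\e_i)-F(\cdot))/h$ along a null sequence. For the distributional gradient identity I would fix $\varphi\in\Dc(\Omega)$ and $1\le i\le d$, apply the change of variable $\x\mapsto\x-h\e_i$ to write
\[
\int_\Omega\frac{F(\x+h\e_i)-F(\x)}{h}\varphi(\x)\di\lambda^d(\x)=-\int_\Omega F(\x)\frac{\varphi(\x)-\varphi(\x-h\e_i)}{h}\di\lambda^d(\x),
\]
valid once $|h|$ is small enough that both translations of $\spt\varphi$ stay in $\Omega$, and then let $h\to 0$ using dominated convergence on both sides. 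The integrands are supported in a fixed compact subset of $\Omega$ and are uniformly bounded (by $L\|\varphi\|_\infty$ on the left, and by $\sup_{\spt\varphi}|F|\cdot\Lip(\varphi)$ on the right), so we obtain $\int\partial_iF\cdot\varphi=-\int F\cdot\partial_i\varphi$.

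For (iii) the plan is standard mollification followed by the classical fundamental theorem of calculus. Since $\Omega$ is open and the segment $[\x,\y]$ is compactly contained in $\Omega$, for all $n$ sufficiently large there is a neighborhood $V$ of $[\x,\y]$ with $V-\spt u_n\subset\Omega$; on $V$ the convolution $F*u_n$ is then well defined and smooth (independently of how we extend $F$ outside $\Omega$), and by differentiating under the integral sign and invoking (ii) on test functions supported in $V-\spt u_n$ we get $\nabla(F*u_n)=F'*u_n$ on $V$. Applying the classical fundamental theorem of calculus to the smooth function $t\mapsto(F*u_n)(\x+t(\y-\x))$ on $[0,1]$ then yields
\[
(F*u_n)(\y)-(F*u_n)(\x)=\int_0^1\ip{(F'*u_n)(\x+t(\y-\x))}{\y-\x}\di t,
\]
and since $F$ is continuous we have $(F*u_n)(\x)\to F(\x)$ and $(F*u_n)(\y)\to F(\y)$ as $n\to\infty$, which is the required identity.

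The only step that requires any care is the commutation $\nabla(F*u_n)=F'*u_n$ on $V$ in (iii), because $F$ is defined only on $\Omega$. This is harmless: the shrinking support of $u_n$ together with the compact containment of $[\x,\y]$ inside $\Omega$ localizes everything away from $\partial\Omega$ for large $n$, so the identity reduces to the standard fact that mollification commutes with distributional differentiation on $\er^d$, combined with (ii). No other step presents any real obstacle.
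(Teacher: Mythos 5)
Your proposal is correct and follows essentially the same route as the paper: Rademacher for (i), measurability via Baire-class-one limits of difference quotients plus a standard duality computation for (ii), and mollification plus the fundamental theorem of calculus with a limit passage for (iii). The only (harmless) deviations are that in (ii) you justify $\int_\Omega F'\cdot\varphi=-\int_\Omega F\cdot\nabla\varphi$ by translation/difference quotients and dominated convergence where the paper invokes Fubini and one-dimensional integration by parts, and in (iii) you handle $\Omega\subsetneqq\er^d$ by localizing the convolution near the segment $[\x,\y]$ instead of first extending $F$ to a Lipschitz function on all of $\er^d$ as the paper does; both of your variants are complete and slightly more self-contained.
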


\begin{proof} (i) $F$ is differentiable almost everywhere by the classical Rademacher theorem (see, e.g., \cite[Theorem 3.1.6]{federer} or \cite[Theorem 30.3]{LM}). The estimate of the norm is obvious.

(ii) The set of Fr\'echet-differentiability points is of full measure, hence Lebesgue measurable, by (i) (in fact, it is a Borel set by \cite[Lemma 30.2]{LM}, for a more general result see \cite[Theorem 2]{zaj91}). Moreover, since Fr\'echet derivative is determined by partial derivatives and partial derivatives of a continuous function are clearly of the first Baire class, the mapping $F'$ is measurable. The integral formula  follows easily using Fubini theorem and integration by parts for absolutely continuous functions. 

(iii) Let us first show the formula in case $\Omega=\er^d$. Then for each $n\in\en$ the convolution $F*u_n$ is a $\C^\infty$ function on $\er^d$ and its gradient satisfies $\nabla(F*u_n)= F'*u_n$. Hence we have
$$(F*u_n)(\y)-(F*u_n)(\x)=\int_0^1 \ip{(F'*u_n)(\x+t(\y-\x))}{\y-\x}\di t,\ \x,\y\in\er^d,n\in\en.$$
Since $F*u_n\to F$ pointwise (in fact, uniformly on compact sets), we can take the limit to obtain the formula.

Next suppose that $\Omega\subsetneqq\er^d$. In this case the convolutions including functions defined on $\Omega$ are understood, as usually, in the sense that the respective functions are extended by zero outside $\Omega$. 

Let $\tilde F$ be a Lipschitz extension of $F$ defined on $\er^d$. Such an extension exists, for example, due to \cite[Theorem 30.5]{LM}. (In fact, one can preserve the Lipschitz constant, but it is not important at this point.) Hence, using the case $\Omega=\er^d$ the formula holds with $\tilde F$ in place of $F$, i.e., 
$$\tilde F(\y)-\tilde F(\x)=\lim_{n\to\infty} \int_0^1 \ip{(\tilde F'*u_n)(\x+t(\y-\x))}{\y-\x}\di t,\quad \x,\y\in\er^d.$$
Given $\x,\y\in\Omega$, the segment $[\x,\y]$ is a compact subset of $\Omega$ and hence we can find $\varepsilon>0$ with $\varepsilon<\dist([\x,\y],\er^d\setminus\Omega)$. Further, by the properties of the approximate unit, there is some $n_0\in\en$ such that $\spt u_n\subset U(\o,\varepsilon)$ for $n\ge n_0$.
Then for $n\ge n_0$ one has
$$ (F'*u_n)(\x+t(\y-\x))=(\tilde F'*u_n)(\x+t(\y-\x)) \mbox{ for }t\in[0,1].$$
Hence the formula follows.
\end{proof}

\begin{prop}\label{P:2} For any $F\in\Lip_0(\Omega)$ set $T(F)=F'$. Then the following hold.

\begin{itemize}

	\item[(i)] $T$ is a linear isometry of $\Lip_0(\Omega)$ into $L^\infty(\Omega,E^*)$.

	\item[(ii)] The range of $T$ is
	$$X(\Omega)=\{\f=(f_i)_{i=1}^d\in L^\infty(\Omega,E^*)\setsep \partial_i f_j=\partial_j f_i\mbox{ for }i,j=1,\dots,d\},$$
	where the derivatives are considered in the sense of distributions on $\Omega$.

	\item[(iii)] The inverse operator $T^{-1}\colon X(\Omega)\to \Lip_0(\Omega)$ is defined by
	$$T^{-1}(\f)(\x)= \lim_{n\to\infty}\int_0^1 \ip{(\f*u_n)(t\x)}{\x}\di t,\ \f\in X(\Omega),\x\in\Omega.$$
\end{itemize}

\end{prop}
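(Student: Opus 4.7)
My plan for proving Proposition~\ref{P:2} is to mirror the one-dimensional argument, with Proposition~\ref{P:L1} as the main tool. For part~(i), linearity is automatic; the inequality $\norm{T(F)}_\infty \le \Lip(F)$ is Proposition~\ref{P:L1}(i); and I would get the reverse inequality by inserting the formula of Proposition~\ref{P:L1}(iii) into $|F(\y)-F(\x)|$ and bounding the integrand via $\norm{F' * u_n}_\infty \le \norm{F'}_\infty$ (extension of $F'$ by zero preserves the $L^\infty$-norm, and convolution with the probability density $u_n$ does not increase it).

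For part~(ii), the inclusion $T(\Lip_0(\Omega)) \subset X(\Omega)$ is forced by the symmetry of mixed second distributional partial derivatives: $\partial_i\partial_j F = \partial_j\partial_i F$. The content lies in the reverse inclusion, which I plan to prove by a mollification construction. Fix $\varepsilon_n$ with $\spt u_n \subset U(\o,\varepsilon_n)$ and set $U_n = \{\x \in \Omega : \dist(\x, E\setminus\Omega) > \varepsilon_n\}$. Because $\Omega$ is open and convex, each $U_n$ is itself open and convex, contains $\o$ for large $n$, and the $U_n$ exhaust $\Omega$. For $\x \in U_n$ the function $\y \mapsto u_n(\x-\y)$ lies in $\Dc(\Omega)$, so testing the identity $\partial_i f_j = \partial_j f_i$ against it yields $\partial_i(\f * u_n)_j(\x) = \partial_j(\f * u_n)_i(\x)$; thus $\f * u_n$ is curl-free on $U_n$.

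Having this, I set $F_n(\x) = \int_0^1 \ip{(\f*u_n)(t\x)}{\x}\di t$. Differentiating under the integral, using the curl-free identity to replace $\partial_i(\f*u_n)_j$ by $\partial_j(\f*u_n)_i$, and integrating by parts in $t$, one checks that $\nabla F_n = \f*u_n$ on $U_n$; moreover $F_n(\o)=0$ and $\Lip(F_n|_{U_n}) \le \norm{\f}_\infty$. A diagonal Arzelà-Ascoli argument over a compact exhaustion of $\Omega$ then produces a subsequence converging locally uniformly on $\Omega$ to a $\norm{\f}_\infty$-Lipschitz function $F$ with $F(\o)=0$. Passing to distributional limits on $\Omega$ and using $\f*u_{n_k} \to \f$ in $L^1_{\mathrm{loc}}$, I obtain $\nabla F = \f$ in $\Dcp(\Omega)$, hence $F' = \f$ almost everywhere by Proposition~\ref{P:L1}(ii); in particular $T(F) = \f$.

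Part~(iii) follows immediately: by~(i) the map $T$ is an isometry, hence injective, so $F$ is the unique element of $\Lip_0(\Omega)$ with $T(F) = \f$ and $F(\o) = 0$; therefore every subsequential limit of $\{F_n(\x)\}_n$ equals $F(\x)$, so the whole sequence converges pointwise on $\Omega$ to $F$, which is exactly the displayed formula. The main obstacle I anticipate is the mollification step in~(ii): cleanly transferring the curl-free identity from $\f$ to $\f*u_n$ on the exhausting subdomains $U_n$ and keeping domain issues straight (the mollified field is defined on all of $\er^d$ but the potential construction is only valid on the convex subdomain $U_n$, and the potential has to be extracted and patched coherently as $n \to \infty$). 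Once this is in place, the Lipschitz bounds, Arzelà-Ascoli compactness, and distributional convergence are routine.
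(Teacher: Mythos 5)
Your proposal is correct and follows essentially the same route as the paper: the isometry via the bound $\norm{F'*u_n}_\infty\le\norm{F'}_\infty$ combined with Proposition~\ref{P:L1}(iii), and the reverse inclusion in (ii) via the support argument transferring $\partial_i f_j=\partial_j f_i$ to the mollifications on the convex exhaustion $\Omega_n$, followed by Arzel\`a--Ascoli and a distributional limit. The only cosmetic difference is that you build the potentials $F_n$ explicitly by the line integral from the base point (which also hands you (iii) directly), whereas the paper invokes the existence of a primitive of a curl-free field on a convex set abstractly and reads off (iii) from Proposition~\ref{P:L1}(iii).
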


\begin{proof} It follows from Proposition~\ref{P:L1} that $T$ is a linear operator from $\Lip_0(\Omega)$ to $L^\infty(\Omega,E^*)$ with $\norm{T}\le 1$. Further, the formula for the inverse mapping follows from 
Proposition~\ref{P:L1}(iii). It remains to identify the range and to show that $T$ is an isometry.

Let us start by proving that $T$ is an isometry. Fix $F\in \Lip_0(\Omega)$ and set 
$$L=\norm{F'}_{L^\infty(\Omega,E^*)} =\esssup_{x\in\Omega}\norm{F'(\x)}_{E^*}.$$
Then $\norm{F'*u_n}_{L^\infty(\Omega,E^*)}\le L$ for each $n\in\en$. Indeed, fix $n\in\en$, $\x\in\Omega$ and $\h\in E$ with $\norm{\h}\le 1$. Then
$$\begin{aligned}
\abs{\ip{(F'*u_n)(\x)}{\h}}
&=\abs{\sum_{i=1}^d (\partial_i F * u_n)(\x) h_i}
=\abs{\sum_{i=1}^d \int_{\er^d}\partial_i F(\y)  u_n(\x-\y)\di \y \cdot h_i}
\\&=\abs{ \int_{\er^d}\sum_{i=1}^d\partial_i F(\y)\cdot h_i\cdot u_n(\x-\y)\di \y }
=\abs{\int_{\er^d}\ip{F'(\y)}{\h}  u_n(\x-\y)\di \y }
\\&\le\int_{\er^d}\abs{\ip{F'(\y)}{\h}}  u_n(\x-\y)\di \y 
\le \int_{\er^d}\norm{F'(\y)}_{E^*}  u_n(\x-\y)\di \y
\le L.\end{aligned}$$
(During the computation we again used the convention that the functions defined on $\Omega$ are extended by zero
on $\er^d\setminus\Omega$.)
Hence, indeed, $\norm{(F'*u_n)(\x)}\le L$ for each $n\in\en$ and $\x\in\Omega$. Therefore, for each $n\in\en$ and $\x,\y\in\Omega$ we have
$$\begin{aligned}\abs{\int_0^1 \ip{(F'*u_n)(\x+t(\y-\x))}{\y-\x}\di t}&\le 
\int_0^1 \abs{\ip{(F'*u_n)(\x+t(\y-\x))}{\y-\x}}\di t
\\&\le \int_0^1 \norm{(F'*u_n)(\x+t(\y-\x))}_{E^*}\cdot\norm{\y-\x}_E\di t
\\&\le L\norm{\y-\x}_E,
\end{aligned}$$
hence by the formula in Proposition~\ref{P:L1}(iii) the function $F$ is $L$-Lipschitz.

It remains to prove the assertion (ii). One one hand, it is obvious that the range of $T$ is contained in $X(\Omega)$, since, in the sense of distributions on $\Omega$, always $\partial_i\partial j F=\partial_j\partial_i F$ for $F\in \Lip_0(\Omega)$ (in fact, for any distribution $F\in\Dcp(\Omega)$). 

Conversely, suppose that $\f\in X(\Omega)$. Consider $\f$ to be extended by $0$ on $\er^d\setminus\Omega$. 
For $i,j\in\{1,\dots,d\}$ set
$$U_{i,j}=\partial_j f_i\mbox{ in }\Dcp(\Omega)\mbox{\quad and\quad}V_{i,j}=\partial_j f_i\mbox{ in }\Dcp(\er^d).$$
Clearly $U_{i,j}=V_{i,j}|_{\Dc(\Omega)}$ and $U_{i,j}=U_{j,i}$ for $i,i\in\{1,\dots,d\}$. 

Suppose without loss of generality that $\spt u_n\subset U(\o,\frac1n)$ for each $n\in\en$.
For $n\in\en$ set $\Omega_n=\{\x\in\Omega\setsep \dist(\x,\er^d\setminus\Omega)>\frac1n\}$. Then $\Omega_n$ is a convex open set and, moreover, $\o\in\Omega_n$ for $n$ large enough. Consider functions $u_n*\f$ for $n\in\en$. Then $u_n*\f$ is a $\C^\infty$ mapping defined on $\er^d$. Moreover, 
$$\partial_j(u_n*f_i)(\x)=\partial_i(u_n*f_j)(\x),\quad i,j\in\{1,\dots,d\},\x\in \Omega_n,n\in\en.$$
Indeed, fix $n\in\en$, $\x\in\Omega_n$ and $i,j\in\{1,\dots,d\}$. Then
$$\partial_j(u_n*f_i)(\x)=(u_n*V_{i,j})(\x)=V_{i,j}(\y\mapsto u_n(\x-\y))=U_{i,j}(\y\mapsto u_n(\x-\y)),$$
since the support of $\y\mapsto u_n(\x-\y)$ equals 
$$\x-\spt u_n\subset U(\x,\tfrac1n)\subset\Omega.$$
Thus we can conclude by recalling $U_{i,j}=U_{j,i}$.

It follows that there is a $\C^\infty$-function $F_n\colon \Omega_n\to\er$ with $\nabla F_n=u_n*\f$ on $\Omega_n$. Without loss of generality we can suppose $F_n(\o)=0$ for each $n\in\en$.
Further, since $\norm{u_n*\f}_{L^\infty(\Omega,E^*)}\le \norm{\f}_{L^\infty(\Omega,E^*)}$ for each $n\in\en$, each
$F_n$ is $L$-Lipschitz, where $L=\norm{\f}_{L^\infty(\Omega,E^*)}$. Since the sequence $(F_n)$ is uniformly Lipschitz
and $F_n(\o)=0$ for each $n\in\en$, it is easy to check it is locally uniformly bounded on $\Omega$. Therefore, by Arzel\`a-Ascoli theorem one can find a subsequence which converges to an $L$-Lipschitz function $F$ uniformly on compact subsets of $\Omega$. (Given $K\subset \Omega$ compact, then $K\subset\Omega_n$ for large $n$ and hence there is $n_0$ such that
$(F_n)_{n\ge n_0}$ is uniformly bounded and uniformly Lipschitz on $K$, so there is a subsequence uniformly convergent on $K$. A diagonal argument completes the construction.) Without loss of generality suppose $F_n\to F$. Then for any $\varphi\in\Dc(\Omega)$ we have
$$\int_{\Omega} F\cdot \nabla\varphi 
=\lim_{n\to\infty} \int_{\Omega_n} F_n\cdot \nabla\varphi 
=-\lim_{n\to\infty} \int_{\Omega_n} (u_n*\f)\cdot \varphi
=-\int_{\Omega} \f\cdot \varphi.$$
where we used that $\spt\varphi\subset\Omega_n$ for $n$ large enough. Hence $F'=\f$.
\end{proof}

\begin{remark}
The assertion (ii) of the previous proposition for the case $\Omega=\er^d$ is the content of \cite[Lemma 2]{lerner}. The proof of the nontrivial inclusion is done in a different way. First, if $\f\in X(\er^d)$,
by \cite[Proposition 4.3.9]{horvath} there is a distribution $U\in\Dcp(\er^d)$ such that $\f$ is the distributional derivative of $U$. 

This means that $U$ belongs to the space $L^1_\infty(\er^d)$ defined in \cite[Section~1.1]{Mazya}. It is shown in \cite[Theorem~1.1.2]{Mazya} that then $U\in W^{1,p}_{loc}(\er^d)$ for any $p>1$. 
Let $K=\sqrt d\int_0^1 t^{-d/p}\di t$. Lemma~4.28 in \cite{Adams-Fournier} gives 
\begin{equation}\label{eq:embedd}
\forall \x,\y\in\er^d,p>d, u\in C^\infty(\er^d): |u(\x)-u(\y)|\leq 2K\|\x-\y\|^{1-\frac dp}\|\nabla u\|_{L^p(Q)},
\end{equation}
where $Q$ is a cube with sidelength $2\|\x-\y\|$ that contains $\x$ and $\y$. 
From \cite[Theorem~4.12, Part I]{Adams-Fournier} the continuous embedding of $W^{1,p}_{loc}(\er^d)$ into $C(\er^d)$ follows. Consequently, \eqref{eq:embedd} holds for all $u\in W^{1,p}_{loc}(\er^d)$ whenever $p>d$. 
Application of \eqref{eq:embedd} to $U$ together with H\"older's inequality and the fact that $\nabla U\in L^\infty(\er^d)$ gives 
$$
\forall \x,\y\in\er^d,p>d: |U(\x)-U(\y)|\leq 2^{1+\frac 1p}K\|\x-\y\|^{1-\frac dp}\|\nabla U\|_{L^p(Q)}\leq 2^{1+\frac 1p}K\|\x-\y\|\|\f\|_\infty ,
$$
i.e., $U$ is a Lipschitz function.

A similar argument probably could be done for any convex domain, but it should be more technical and
we have not find any explicit reference for it. Therefore we give above a direct proof of the assertion (ii) for a general convex domain. 

We further remark that $X(\Omega)$ is a known object, the elements of $X(\Omega)$ are called \emph{closed $L^\infty(\Omega)$ currents}.
\end{remark}

\begin{prop}\label{P:3} Set
$$Y(\Omega)=\{\g\in L^1(\Omega,E)\setsep \div \g=0\mbox{ in }\Dcp(\er^d)\}.$$
Then the following hold:
\begin{itemize}

	\item $X(\Omega)=Y(\Omega)^\perp$ and $Y(\Omega)=(X(\Omega))_\perp$ in the standard duality
$(L^1(\Omega,E))^*=L^\infty(\Omega,E^*)$.

  \item $Y(\Omega)\cap\Dc(\Omega,E)$ is dense in $Y(\Omega)$.
\end{itemize}
\end{prop}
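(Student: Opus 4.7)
The plan is to first establish $X(\Omega)=Y(\Omega)^\perp$ by proving the two inclusions separately, then deduce $Y(\Omega)=X(\Omega)_\perp$ from the bipolar identity, and finally obtain the density of $Y(\Omega)\cap\Dc(\Omega,E)$ in $Y(\Omega)$ almost for free from the specific test fields used in the annihilator computation.

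To show $X(\Omega)\subseteq Y(\Omega)^\perp$, I would take $\f\in X(\Omega)$ and $\g\in Y(\Omega)$. By Proposition~\ref{P:2}(ii), write $\f=F'$ with $F\in\Lip_0(\Omega)$ and fix an $L$-Lipschitz extension $\tilde F\colon\er^d\to\er$. Extending $\g$ by zero outside $\Omega$, the pairing $\int_\Omega\g\cdot\f$ equals $\int_{\er^d}\g\cdot\nabla\tilde F$, so the goal is to show the latter vanishes. I would smoothen by $\tilde F_n=\tilde F*u_n$ (a $\C^\infty$ function with $\|\nabla\tilde F_n\|_\infty\le L$ and $\nabla\tilde F_n\to\nabla\tilde F$ almost everywhere) and introduce cutoffs $\psi_R\in\Dc(\er^d)$ with $\psi_R\equiv 1$ on $U(\o,R)$, $\|\nabla\psi_R\|_\infty\le 2/R$, and $\spt\psi_R\subseteq U(\o,2R)$. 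Since $\tilde F_n\psi_R\in\Dc(\er^d)$ and $\div\g=0$ in $\Dcp(\er^d)$,
\[\int_{\er^d}\psi_R\,\g\cdot\nabla\tilde F_n=-\int_{\er^d}\tilde F_n\,\g\cdot\nabla\psi_R.\]
The right-hand side tends to zero as $R\to\infty$ because $|\tilde F_n|$ is of order $R$ on $\spt\nabla\psi_R$ while $\|\nabla\psi_R\|_\infty$ is of order $1/R$ and $\int_{\|\x\|\ge R}|\g|\to 0$ by integrability of $\g$. Dominated convergence on the left then gives $\int_{\er^d}\g\cdot\nabla\tilde F_n=0$, and passing $n\to\infty$ by dominated convergence again yields $\int_{\er^d}\g\cdot\nabla\tilde F=0$.

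For the reverse inclusion $Y(\Omega)^\perp\subseteq X(\Omega)$, given $\f=(f_1,\dots,f_d)\in Y(\Omega)^\perp$, an arbitrary $\varphi\in\Dc(\Omega)$, and indices $i,j$, I would test against the vector field $\g$ with $g_i=\partial_j\varphi$, $g_j=-\partial_i\varphi$, and remaining components zero. This field lies in $\Dc(\Omega,E)$ and satisfies $\div\g=\partial_i\partial_j\varphi-\partial_j\partial_i\varphi=0$ on all of $\er^d$, so $\g\in Y(\Omega)\cap\Dc(\Omega,E)$. The orthogonality $\int\f\cdot\g=0$ then translates into $\langle\partial_jf_i-\partial_if_j,\varphi\rangle=0$ for every such $\varphi$, which are exactly the defining relations of $X(\Omega)$. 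Combining the two inclusions gives $X(\Omega)=Y(\Omega)^\perp$; since $Y(\Omega)$ is norm-closed in $L^1(\Omega,E)$ (distributional divergence being $L^1$-continuous), the bipolar identity yields $X(\Omega)_\perp=(Y(\Omega)^\perp)_\perp=\overline{Y(\Omega)}=Y(\Omega)$.

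The density claim now follows at once: the test fields used in the reverse inclusion already belong to $Y(\Omega)\cap\Dc(\Omega,E)$, so the same computation forces $(Y(\Omega)\cap\Dc(\Omega,E))^\perp\subseteq X(\Omega)$, while the opposite inclusion is automatic from $Y(\Omega)\cap\Dc(\Omega,E)\subseteq Y(\Omega)$. Both annihilators therefore coincide with $X(\Omega)$, and passing to pre-annihilators gives $\overline{Y(\Omega)\cap\Dc(\Omega,E)}=X(\Omega)_\perp=Y(\Omega)$. I expect the principal obstacle to be the truncation step in the first inclusion: because $\tilde F$ only grows linearly and $\g$ may have non-compact support in $\Omega$, one cannot test $\div\g=0$ against $\tilde F_n$ directly, and the quantitative balance between the linear growth of $\tilde F_n$ on $\spt\nabla\psi_R$ and the $1/R$-decay of $\nabla\psi_R$ must be arranged carefully so that the tail $\int_{\|\x\|\ge R}|\g|$ absorbs the excess.
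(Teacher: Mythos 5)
Your proof is correct, and the overall architecture coincides with the paper's: the same rotational test fields $g_i=\partial_j\varphi$, $g_j=-\partial_i\varphi$ for the easy inclusion, and the same Hahn--Banach/bipolar bookkeeping to pass from $(Y(\Omega)\cap\Dc(\Omega,E))^\perp=Y(\Omega)^\perp=X(\Omega)$ to the density statement. The one genuine divergence is in the hard inclusion. The paper first reduces to $\Omega=E$ via a Lipschitz extension, then handles a general $\g\in Y(\Omega)$ in two regularization steps: it mollifies $\g$ to reduce to smooth divergence-free fields, and only then introduces the dilated cutoffs $\psi(\x/n)$, pairing $F$ against the now-classical function $\div\g_n$. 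You instead regularize on the other side: you mollify the extended Lipschitz function $\tilde F$ so that $\tilde F_n\psi_R$ becomes a legitimate element of $\Dc(\er^d)$ against which the hypothesis $\div\g=0$ in $\Dcp(\er^d)$ can be tested directly, handling arbitrary $\g\in Y(\Omega)$ in a single pass. The key quantitative point is identical in both versions --- the linear growth of the (mollified) Lipschitz function on $\spt\nabla\psi_R$ is exactly compensated by the $1/R$ decay of $\nabla\psi_R$, with the $L^1$ tail $\int_{\norm{\x}\ge R}\norm{\g}$ supplying the convergence to zero --- but your arrangement saves the separate smooth-field step and makes slightly more transparent where the hypothesis ``divergence zero on $\er^d$, not merely on $\Omega$'' enters. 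The small points you leave implicit (that $\norm{\nabla\tilde F_n}_\infty\le L$ and that $\nabla\tilde F_n\to\nabla\tilde F$ a.e. at Lebesgue points, and that $Y(\Omega)$ is norm-closed so the bipolar identity applies) are all standard and are in fact available from Proposition~\ref{P:L1} and Proposition~\ref{P:2}.
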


\begin{proof} Let us recall any $\g\in L^1(\Omega,E)$ is canonically identified with an element of $L^1(\er^d,E)$  ($\g$ is extended by zero outside $\Omega$). Hence, if $\g\in L^1(\Omega,E)$, then $\div \g$ in $\Dcp(\er^d)$ is the distributional divergence of the described extension of $\g$.

Let us start the proof by showing $X(\Omega)=(Y(\Omega)\cap \Dc(\Omega,E))^\perp$:
\begin{itemize}

	\item[$\supset$:] Fix $\f\in (Y(\Omega)\cap \Dc(\Omega,E))^\perp$. To prove that $\f\in X(\Omega)$, take 
arbitrary $i,j\in\{1,\dots,d\}$ distinct and $\varphi\in\Dc(\Omega)$. Set $g_i=\partial_j\varphi$, $g_j=-\partial_i\varphi$ and $g_k=0$ for $k\in\{1,\dots,d\}\setminus\{i,j\}$. Then
$$\ip{\partial_i f_j-\partial_j f_i}{\varphi}=-\ip{f_j}{\partial_i\varphi}+\ip{f_i}{\partial_j\varphi}=\ip{\f}{\g}=0.$$

\item[$\subset$:]  Suppose that $\f\in X(\Omega)$ and take $\g\in Y(\Omega)\cap\Dc(\Omega,E)$. We will show that $\ip{\f}{\g}=0$. By Proposition~\ref{P:2} we know that there is $F\in\Lip_0(\Omega)$ with $F'=\f$. Then $$\ip{\f}{\g}=\ip{F'}{\g}=-\ip{F}{\div\g}=0.$$
\end{itemize}

Hence, by the Hahn-Banach theorem we have $(X(\Omega))_\perp=\overline{Y(\Omega)\cap\Dc(\Omega,E)}$. Therefore, to complete the proof it 
is enough to show that $Y(\Omega)\subset(X(\Omega))_\perp$.

Let us prove it first in case $\Omega=E$. Take $\f\in X(\Omega)$ and $\g\in Y(\Omega)$. 

In the first step suppose that $\g\in\C^\infty(\er^d)$. Fix some $\psi\in\Dc(\er^d)$ with $\psi=1$ on $U(\o,1)$, $\spt\psi\subset U(\o,2)$ and $0\le\psi\le 1$ on $\er^d$. (The balls are taken with respect to the norm of $E$.) For $n\in\en$ set $\g_n(\x)=\psi(\frac{\x}{n})\cdot\g(\x)$.
Let $F\in\Lip_0(\er^d)$ be such that $\f=F'$. Denote by $L$ the Lipschitz constant of $F$ (i.e., $L=\norm{\f}_{L^\infty(\er^d,E^*)}$). Then 
$$\begin{aligned}
\abs{\ip{\f}{\g_n}}& = \abs{\ip{F'}{\g_n}}=\abs{\ip{F}{\div\g_n}}
\\&=\abs{\int_{\er^d}F(\x)\cdot\left(\psi(\tfrac{\x}{n})\div\g(\x) + \tfrac1n\ip{\g(\x)}{\nabla\psi(\tfrac{\x}{n})}\right)\di\x}
\\&=\abs{\int_{\er^d}F(\x)\cdot \frac1n\ip{\g(\x)}{\nabla\psi(\frac{\x}{n})}\di\x}
\\&=\abs{\int_{U(\o,2n)\setminus U(\o,n)}F(\x)\cdot \tfrac1n\ip{\g(\x)}{\nabla\psi(\tfrac{\x}{n})}\di\x}
\\&\le\frac1n\cdot\sup_{\x\in U(\o,2n)}\cdot\abs{F(\x)}\cdot\norm{\nabla\psi}_{L^\infty(\er^d,E^*)}\cdot
\int_{\er^d\setminus U(\o,n)} \norm{\g(\x)}_E\di x
\\&\le 2L\cdot\norm{\nabla\psi}_{L^\infty(\er^d,E^*)}\cdot
\int_{\er^d\setminus U(\o,n)} \norm{\g(\x)}_E\di x\to 0\end{aligned}$$
for $n\to\infty$. Since $\g_n\to\g$ in $L^1(\er^d,E)$, we conclude that $\ip{\f}{\g}=0$.

In the second step let $\g\in Y^d$ be arbitrary. Then for each $n\in\en$ we have $u_n*\g\in\C^\infty(\er^d,E)$,
$\div(u_n*\g)=u_n*\div\g=0$, hence $\ip{\f}{u_n*\g}=0$. Since $u_n*\g\to\g$ in $L^1(\er^d,E)$, necessarily $\ip{\f}{\g}=0$.

Finally, let $\Omega$ be arbitrary, $\f\in X(\Omega)$ and $\g\in Y(\Omega)$. Let  $F\in\Lip_0(\Omega)$ be such that $\f=F'$.
Let $\tilde F\in\Lip_0(E)$ be an extension of $F$. Then, using the case $\Omega=E$ and the assumption $\div \g=0$ in $\Dcp(\er^d)$ (and not just in $\Dcp(\Omega)$), we have
$$\begin{aligned}\ip{\f}{\g}&=\int_{\Omega}\ip{F'(\x)}{\g(\x)}\di\x 
=\int_{\Omega}\ip{\tilde F'(\x)}{\g(\x)}\di\x 
\\&=\int_{\er^d}\ip{\tilde F'(\x)}{\g(\x)}\di\x 
=\ip{\tilde F'}{\g}=0.\end{aligned}$$
\end{proof}

\begin{remark}
(1) By Proposition \ref{P:2}  $Lip_0(\Omega)$ is isometric with $X(\Omega)$, by Proposition \ref{P:3} $X(\Omega)$ is isometric with $(L^1(\Omega,E)/_{Y^d})^*$. Hence  $Lip_0(\Omega)$ is isometric with
$(L^1(\Omega,E)/_{Y^d})^*$. 
For the case $\Omega=\er^d$ this is the content of \cite[Theorem 3]{lerner}.

(2) In \cite{lerner} the proof of the case $\Omega=\er^d$ of  Proposition~\ref{P:3} is contained in Lemmata 4 and 5, although the equality $X(\er^d)=Y(\er^d)^\perp$ is not explicitly mentioned there.
\end{remark}

\begin{prop}\label{P:div} Let $\a\in\Omega$ be fixed. Then there is $\g\in L^1(\Omega,\er^d)$ with compact support in $\Omega$
such that $\div\g=\ep_\o-\ep_\a$ in $\Dcp(\er^d)$.
\end{prop}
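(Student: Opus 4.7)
The plan is to construct $\g$ in three steps: first build a vector field on $\er^d$ with the correct distributional divergence but without control on its support; then cut it off so the support lies inside $\Omega$; finally correct the smooth error introduced by the cutoff by solving a classical zero-mean divergence equation on a bounded convex set.

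Since $\Omega$ is open and convex and contains the compact segment $[\o,\a]$, there is a bounded open convex set $D$ with $[\o,\a]\subset D$ and $\overline D\subset\Omega$ (for instance, the Minkowski sum of $[\o,\a]$ with a sufficiently small open ball in $E$). Let $G$ denote the standard Newtonian fundamental solution of the Laplacian on $\er^d$ with respect to the Euclidean norm, so that $\Delta G=\ep_\o$ in $\Dcp(\er^d)$. Setting
$$\g_1(\x)=\nabla G(\x)-\nabla G(\x-\a),\qquad \x\in\er^d,$$
one has $\g_1\in L^1_{\mathrm{loc}}(\er^d,\er^d)$ because the singularities of $\nabla G$ are of order $|\x|^{1-d}$, which is locally integrable in $\er^d$ for $d\ge 2$; moreover $\div\g_1=\ep_\o-\ep_\a$ in $\Dcp(\er^d)$ by construction.

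Next, choose $\chi\in\Dc(D)$ with $\chi\equiv 1$ on an open neighborhood $V$ of $[\o,\a]$ whose closure is contained in $D$, and set $\g_2=\chi\g_1$. Then $\g_2\in L^1(\Omega,\er^d)$ has compact support in $D\subset\Omega$, and the Leibniz rule for distributions yields
$$\div\g_2=\chi\,\div\g_1+\nabla\chi\cdot\g_1=\ep_\o-\ep_\a+h,\qquad h:=\nabla\chi\cdot\g_1,$$
using that $\chi(\o)=\chi(\a)=1$. Since $\nabla\chi$ vanishes on $V$ while $\g_1$ is smooth on $\er^d\setminus\{\o,\a\}$, the function $h$ belongs to $\Dc(D)$. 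Testing the identity above against any $\psi\in\Dc(\er^d)$ equal to $1$ on $\spt\chi\cup\{\o,\a\}$ gives $\int_{\er^d} h\di\x=0$.

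The remaining task is to find $\g_3\in\Dc(D,\er^d)$ with $\div\g_3=-h$; then $\g:=\g_2+\g_3$ has the required properties. The existence of such a $\g_3$ is the classical solvability of the divergence equation with compactly supported smooth solution on a bounded star-shaped (in particular, convex) open set, given by Bogovski's explicit integral formula. I expect this last step to be the main technical obstacle; it can be invoked as a known fact or established directly via Bogovski's construction applied on $D$.
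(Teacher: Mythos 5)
Your proposal is correct and follows essentially the same route as the paper: take the difference of the gradients of the Newtonian potential at $\o$ and at $\a$, cut it off near the segment $[\o,\a]$, and remove the resulting smooth, zero-mean divergence error by solving $\div\g_3=-h$ with a compactly supported smooth field (the paper invokes the same Bogovskii-type result, citing Auxiliary Lemma 3.15 of Novotn\'y--Stra\v{s}kraba). The only differences are cosmetic: you solve the correction equation on the convex set $D$ and verify $\int h=0$ by testing against a function $\psi$ equal to $1$ on $\spt\chi\cup\{\o,\a\}$, whereas the paper works on the annular region where the error is actually supported and checks the zero-mean condition via the Gauss theorem.
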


\begin{proof} In this proof we will consider $\er^d$ with the euclidean norm.
Set 
$$\h(\x)=\frac{\x}{d\kappa_d\norm{\x}^d},\quad \x\in\er^d\setminus\{\o\},$$
where $\kappa_d$ is the volume of the $d$-dimensional ball. Then $\h$ is the gradient of the standard
fundamental solution of the Laplace equation in $\er^d$. In particular,
\begin{gather}
\div\h=\ep_\o \mbox{ in }\Dcp(\er^d),\\
\h\in\C^\infty(\er^d\setminus\{\o\}), \\
\div\h(\x)=0\mbox{ for } \x\in\er^d\setminus\{\o\}. \label{eq:3}
\end{gather}
Further, for any $r>0$ we have
\begin{equation}
\label{eq:4}\int_{\partial U(\o,r)} \ip{\h(\x)}{\nu(\x)}\di\H^{n-1}(\x) =1,
\end{equation}
where $\nu(\x)$ denotes the outer normal at the point $\x$ and $U(\o,r)$ is the Euclidean ball centered at $\o$ with radius $r$. Indeed, since $\nu(\x)=\frac{\x}{\norm{\x}}$, one
has
$$\begin{aligned}\int_{\partial U(\o,r)} \ip{\h(\x)}{\nu(\x)}\di\H^{n-1}(\x)&=\int_{\partial U(\o,r)} \frac{1}{d \kappa_d\norm{\x}^{d-1}}\di\H^{n-1}(\x)\\&=\frac{\H^{n-1}(\partial U(\o,r))}{d\kappa_d r^{d-1}}
=\frac{\H^{n-1}(\partial U(\o,1))}{d\kappa_d}=1.\end{aligned}$$
The equation \eqref{eq:4} can be extended to more general domains:
\begin{equation}
\label{eq:5}\begin{aligned} \mbox{If $U$ is a bounded convex domain containing $\o$,}&\\ \mbox{then }\int_{\partial U} \ip{\h(\x)}{\nu(\x)}&\di\H^{n-1}(\x) =1.\end{aligned}
\end{equation}
Indeed, let $r>0$ be so small that $\overline{U(\o,r)}\subset U$. Then, by \eqref{eq:3} and the Gauss theorem \cite[Theorem 37.22]{LM} (note that the boundary of a convex domain is Lipschitz) we have
$$0=\int_{U\setminus\overline{U(\o,r)}} \div\h=\int_{\partial U} \ip{\h(\x)}{\nu(\x)}\di\H^{n-1}(\x)-\int_{\partial U(\o,r)} \ip{\h(\x)}{\nu(\x)}\di\H^{n-1}(\x),$$
thus \eqref{eq:5} follows from \eqref{eq:4}.

We continue by setting
$$\h_\a(\x)=\h(\x)-\h(\x-\a),\quad \x\in\er^d\setminus\{\a,\o\}.$$
Then clearly
\begin{gather}
\div\h_\a=\ep_\o-\ep_\a \mbox{ in }\Dcp(\er^d),\label{eq:div}\\
\h_\a\in\C^\infty(\er^d\setminus\{\o,\a\}), \\
\div\h_\a(\x)=0\mbox{ for } \x\in\er^d\setminus\{\o,\a\}. \label{eq:8}
\end{gather}
and, moreover, by \eqref{eq:5} we get
\begin{equation}\label{eq:6} 
\begin{aligned}\mbox{If $U$ is a bounded convex domain containing $\{\o,\a\}$, then }\\\int_{\partial U} \ip{\h_\a(\x)}{\nu(\x)}\di\H^{n-1}(\x) =0.\end{aligned}
\end{equation}

Since the segment $[\o,\a]$ is a compact subset of $\Omega$, there is $r>0$ such that
$$\overline{[\o,\a]+U(\o,r)}\subset \Omega.$$
Choose some $\eta\in\Dc(\er^d)$ such that
$$\eta=1 \mbox{ on } [\o,\a]+U(0,\tfrac r2) \mbox{ and }\spt\eta\subset[\o,\a]+U(0,\tfrac34r).$$
Then we have
\begin{itemize}
	\item $\eta\h_\a\in L^1(\Omega,\er^d)$ and has a compact support in $\Omega$;
	\item $\eta\h_\a\in\C^\infty(\er^d\setminus\{\o,\a\})$;
	\item $\div \eta\h_\a(\x)=0$ for $\x\in ([\o,\a]+U(0,\tfrac r2))\setminus\{\o,\a\}$;
	\item $\div \eta\h_\a(\x)=0$ for $\x\in \er^d\setminus([\o,\a]+U(0,\frac34r))$.
\end{itemize}
 Hence, if we set $U_1=[\o,\a]+U(0,\frac14r)$ and $U_2=[\o,\a]+U(0,r)$, we get
\begin{multline*}\int_{U_2\setminus\overline{U_1}}\div(\eta\h_\a)\di\lambda^d= \int_{\partial U_2} \ip{\eta(\x)\h_\a(\x)}{\nu(\x)}\di\H^{n-1}(\x) \\\qquad \qquad 
-\int_{\partial U_1} \ip{\eta(\x)\h_\a(\x)}{\nu(\x)}\di\H^{n-1}(\x)
=0-\int_{\partial U_1} \ip{\h_\a(\x)}{\nu(\x)}\di\H^{n-1}(\x)=0.\end{multline*}
Since $\div(\eta\h_\a)$ restricted to $U_2\setminus \overline{U_1}$ is a $\C^\infty$-function with a compact support,
by \cite[Auxiliary lemma 3.15]{NoSt} there is $\g_0\in\Dc(U_2\setminus \overline{U_1},\er^d)$ with $\div\g_0(x)=\div(\eta\h_\a)(x)$ for $x\in U_2\setminus \overline{U_1}$. Set $\g=\eta\h_\a-\g_0$. Then $\g\in L^1(\Omega,\er^d)$ with compact support in $\Omega$. Moreover, we will show that $\div\g=\varepsilon_\o-\varepsilon_\a$ in $\Dc(\er^d)$. Due to \eqref{eq:div} it is enough to show that $\div(\g-\h_\a)=0$ in $\Dc(\er^d)$. But 
$$\g-\h_\a=(\eta-1)\h_\a-\g_0$$
is a $\C^\infty$ vector field on $\er^d$ (note that $\eta-1=0$
on $[\o,\a]+U(\o,\frac{r}{2})$), hence its distributional divergence
is a $\C^\infty$ function and can be computed pointwise.
Let us consider the following three open sets covering $\er^d$:

(i) On $\overline{U_1}$ we have $\eta=1$ and $\g_0=0$, hence
$(\eta-1)\h_\a-\g_0=0$. Thus 
$\div((\eta-1)\h_\a-\g_0)(\x)=0$ for each $\x\in[\o,\a]+U(\o,\frac{r}{2})$.

(ii) On $\er^d\setminus U_2$ we have $\eta=0$ and $\g_0=0$, hence $(\eta-1)\h_\a-\g_0=-\h_\a$, therefore the divergence is zero at each point of this set by \eqref{eq:8}.

(iii) For $x\in U_2\setminus\overline{U_1}$ we have
$$\div((\eta-1)h_\a-\g_0)(\x)=\div(\eta\h_a)(\x)-\div h_\a(\x)-\div\g_0(\x)=0$$
by \eqref{eq:8} and the choice of $\g_0$.

This completes the proof.
\end{proof}

The final ingredient is the following proposition, the proof of Theorem~\ref{t:main} then immediately follows.

\begin{prop}\label{P:main} Let $T:F\mapsto F'$ be the isometry from Proposition~\ref{P:2}. Then the dual mapping $T^*$ maps $(L^1(\Omega,E)/Y(\Omega))$ onto
$\F(\Omega)$. Moreover, for $\g\in L^1(\Omega,E)$ one has $T^*([\g])=\delta(\a)$ if and only if $\div \g=\ep_\o-\ep_\a$ in $\Dcp(\er^d)$.
\end{prop}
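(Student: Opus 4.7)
First I would unwind the adjoint: for $\g\in L^1(\Omega,E)$, viewed naturally as a functional on $L^\infty(\Omega,E^*)$,
$$\ip{T^*(\g)}{F}=\int_\Omega\ip{F'(\x)}{\g(\x)}\di\lambda^d(\x),\qquad F\in\Lip_0(\Omega).$$
By Proposition~\ref{P:3}, $T^*(\g)=0$ precisely when $\g\in Y(\Omega)$, so $T^*$ factors through $L^1(\Omega,E)/Y(\Omega)$. Rewriting the standard duality $(L^1/Y)^*=Y^\perp=X(\Omega)$ through the isometry $T$ gives
$$\|T^*([\g])\|_{\Lip_0(\Omega)^*}=\sup_{\f\in X(\Omega),\,\|\f\|_\infty\le 1}\Betr{\int_\Omega\ip{\f}{\g}\di\lambda^d}=\dist(\g,Y(\Omega))=\|[\g]\|,$$
so the induced map is isometric into $\Lip_0(\Omega)^*$.

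\textbf{Range inside $\F(\Omega)$.} Next I would show that the image lies in $\F(\Omega)$ by approximation. For $\g\in\Dc(\Omega,E)$, coordinate-wise integration by parts (Proposition~\ref{P:L1}(ii)) gives $\ip{T^*(\g)}{F}=-\int_\Omega F(\x)\,\div\g(\x)\di\lambda^d(\x)$. Setting $h:=-\div\g\in\Dc(\Omega)$, the map $\x\mapsto h(\x)\delta(\x)$ is continuous from $\Omega$ into $\F(\Omega)$ with compact support, so it is Bochner integrable, and pairing with any $F\in\Lip_0(\Omega)$ shows $T^*(\g)=\int_\Omega h(\x)\delta(\x)\di\lambda^d(\x)\in\F(\Omega)$. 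Density of $\Dc(\Omega,E)$ in $L^1(\Omega,E)$, boundedness of $T^*$, and norm-closedness of $\F(\Omega)$ in $\Lip_0(\Omega)^*$ then extend this to all $\g\in L^1(\Omega,E)$.

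\textbf{Representation of $\delta(\a)$ (expected main obstacle).} The hard part will be to verify $T^*([\g_\a])=\delta(\a)$ for the $\g_\a$ supplied by Proposition~\ref{P:div}, i.e.\ that $\int_\Omega\ip{F'}{\g_\a}\di\lambda^d=F(\a)$ for every $F\in\Lip_0(\Omega)$: here $\div\g_\a=\ep_\o-\ep_\a$ is a singular distribution while $F$ is only Lipschitz, so a naive integration by parts is not legal. I would extend $F$ to $\tilde F\in\Lip_0(\er^d)$, set $\tilde F_n=u_n*\tilde F$ (smooth, $\Lip(\tilde F)$-Lipschitz, converging to $\tilde F$ uniformly on compacta with $\nabla\tilde F_n=u_n*\tilde F'$), and choose $\psi\in\Dc(\er^d)$ equal to $1$ on an open neighbourhood of $\{\o,\a\}\cup\spt\g_\a$. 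Then $\psi\tilde F_n\in\Dc(\er^d)$, and applying $\div\g_\a=\ep_\o-\ep_\a$ to $\psi\tilde F_n$, using $\psi\equiv1$ on $\spt\g_\a$ together with $\nabla\psi\equiv0$ there, collapses to
$$\int_{\er^d}\ip{\g_\a}{\nabla\tilde F_n}\di\lambda^d=\tilde F_n(\a)-\tilde F_n(\o).$$
Letting $n\to\infty$: the right side tends to $F(\a)-F(\o)=F(\a)$, while $u_n*\tilde F'\to\tilde F'$ a.e.\ with uniform bound $\Lip(\tilde F)$, so dominated convergence against $\g_\a\in L^1$ sends the left side to $\int_\Omega\ip{F'}{\g_\a}\di\lambda^d$. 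Hence $T^*([\g_\a])=\delta(\a)$.

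\textbf{Conclusion.} Finally, the range $T^*(L^1(\Omega,E)/Y(\Omega))$ is closed in $\F(\Omega)$ (as $T^*$ is isometric) and contains $\delta(\a)$ for every $\a\in\Omega$, hence equals $\F(\Omega)$. Injectivity on the quotient together with the previous step then shows $T^*([\g])=\delta(\a)$ iff $[\g]=[\g_\a]$ iff $\g-\g_\a\in Y(\Omega)$ iff $\div\g=\div\g_\a=\ep_\o-\ep_\a$ in $\Dcp(\er^d)$.
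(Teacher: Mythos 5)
Your proof is correct, and it uses the same key ingredients as the paper's (Proposition~\ref{P:div} for a compactly supported representative $\g_\a$ of $\delta(\a)$, Proposition~\ref{P:3} for the isometry of $T^*$ on the quotient, and linear density of $\delta(\Omega)$ for surjectivity), but two of your technical choices genuinely differ. First, to verify $\ip{T^*[\g_\a]}{F}=F(\a)$ you regularize the Lipschitz function (extension $\tilde F$, cutoff $\psi$, mollification $u_n*\tilde F$), whereas the paper regularizes the vector field: since $\g_\a$ has compact support in $\Omega$, $u_n*\g_\a\in\Dc(\Omega,\er^d)$ for large $n$ and $\div(u_n*\g_\a)=u_n*(\ep_\o-\ep_\a)$ is an explicit smooth function, so the computation collapses to $\lim_n\bigl((F*\widecheck{u_n})(\a)-(F*\widecheck{u_n})(\o)\bigr)$ with no extension or cutoff needed; both regularizations are legitimate and yield the same identity. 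Second, and more substantively, you establish the containment $T^*(L^1(\Omega,E)/Y(\Omega))\subset\F(\Omega)$ directly, via the Bochner-integral representation $T^*(\g)=\int_\Omega(-\div\g)(\x)\,\delta(\x)\di\lambda^d(\x)$ for $\g\in\Dc(\Omega,E)$ followed by density and closedness of $\F(\Omega)$; the paper never proves this containment explicitly and instead argues dually, showing that $(T^*)^{-1}$ maps $\F(\Omega)$ into $L^1(\Omega,E)/Y(\Omega)$ (because preimages of the linearly dense set $\delta(\Omega)$ lie there) and then deducing that the restriction $S=(T^*)^{-1}|_{\F(\Omega)}$ is an onto isometry from the fact that $S^*=T^{-1}$ is one. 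Your route buys a concrete description of $T^*\g$ as an average of Diracs and avoids the bidual gymnastics; the paper's is shorter once one accepts the duality argument. The concluding equivalence $T^*([\g])=\delta(\a)$ if and only if $\div\g=\ep_\o-\ep_\a$ is obtained identically in both.
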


\begin{proof} Fix $\a\in\Omega$. Let $\g\in L^1(\Omega,E)$ be a mapping with compact support and satisfying $\div\g=\ep_\o-\ep_\a$ in $\Dcp(\er^d)$. (Such a mapping exists by Proposition~\ref{P:div}.) For any $F\in\Lip_0(\Omega)$ we then have
$$\begin{aligned}
\ip{T^*([\g])}{F}&=\ip{[\g]}{TF}=\ip{[\g]}{F'}
=\int_\Omega \ip{\g(\x)}{F'(\x)}\di\x
\\&=\lim_{n\to\infty} \int_\Omega \ip{(u_n*\g)(\x)}{F'(\x)}\di\x
=\lim_{n\to\infty} \ip{(u_n*\g)}{F'}
\\&	=-\lim_{n\to\infty} \ip{\div (u_n*\g)}{F}
=-\lim_{n\to\infty} \int_\Omega F(\x)\cdot \div (u_n*\g)(\x)\di\x
\\&=-\lim_{n\to\infty} \int_\Omega F(\x)\cdot (u_n*\div\g)(\x)\di\x
\\&=-\lim_{n\to\infty} \int_\Omega F(\x)\cdot (u_n*(\ep_\o-\ep_\a))(\x)\di\x
\\&=-\lim_{n\to\infty}\int_\Omega  F(\x)\cdot (u_n(\x)-u_n(\x-\a))\di\x
\\&= \lim_{n\to\infty} ((F*\widecheck{u_n})(\a)-(F*\widecheck{u_n})(\o))
= F(\a)-F(\o)=\ip{\delta(\a)}{F}.
\end{aligned}$$
The first three equalities are just applications of definitions. The fourth one follows from the fact that $u_n*\g\to\g$ in the $L^1$-norm. Since $\g$ has compact support in $\Omega$, $u_n*\g\in\Dc(\Omega,\er^d)$ for large $n$ and hence the fifth one is just rewriting the expression in the sense of distributions and the sixth one follows from the differentiation rules for distributions. Then a standard calculation follows. Let us point out that by $\widecheck{u_n}$ we mean the function defined by a formula $\widecheck{u_n}(\x)=u_n(-\x)$, $\x\in\er^d$, and that we use the obvious fact that the sequence $(\widecheck{u_n})$ is also an approximate unit.

Hence, we conclude that $T^*([\g])=\delta(\a)$.
Now, let $\g_1\in L^1(\Omega,E)$ be arbitrary. Since $T^*$ is an isometry, $T^*([\g_1])=\delta(\a)$ if and only if $[\g_1]=[\g]$, i.e. $\div(\g_1-\g)=0$ in $\Dcp(\er^d)$,
or, equivalently, $\div\g_1=\delta_\o-\delta_\a$ in $\Dcp(\er^d)$. 

To conclude the proof, observe that $(T^*)^{-1}(\delta(\a))\in (L^1(\Omega,E)/Y(\Omega))$ for any $\a\in\Omega$, hence $(T^*)^{-1}$ maps $\F(\Omega)$ into $(L^1(\Omega,E)/Y(\Omega))$. Let 
$S$ be the restriction of $(T^*)^{-1}$ to $\F(\Omega)$. Since $S^*=T^{-1}$ is an onto isometry,
necessarily $S$ is also an onto isometry. This completes the proof.
\end{proof}

\section*{Acknowledgement}

We are grateful to Professor N.~Lerner for sending us his preprint \cite{lerner}.

%%%%%%%%%%%%%%%%%%%%%%%%%%%%%%%%%%%%%%%%%%%%%%%%%%%%

\def\cprime{$'$}

%\bibliography{lip2}\bibliographystyle{acm}

\end{document}